\newcommand{\dist}{\mathsf{d}}
\newcommand{\meas}{\mathfrak{m}}
\newcommand{\bist}{\mathsf{b}}
\newcommand{\Vol}{\mathop{\mathrm{Vol}}}
\newcommand{\eps}{\varepsilon}
\newcommand{\di}{\mathop{}\!\mathrm{d}}
\newcommand{\haus}{\mathscr{H}}
\newcommand{\Ch}{{\sf Ch}}
\DeclareMathOperator{\RCD}{RCD}
\begin{document}

\title*{Almost rigidity results of Green functions with non-negative Ricci curvature}
\author{Shouhei Honda}
\institute{Shouhei Honda \at  Mathematical Institute, Tohoku University, \email{shouhei.honda.e4@tohoku.ac.jp}
}
%
%
\maketitle


\abstract{This short note provides a survey on rigidity and almost rigidity results of Green functions in a non-smooth setting, obtained in \cite{HP}. We also make some observation on the Cheeger-Yau inequality on RCD spaces of non-negative Ricci curvature with applications.}

\section{Introduction}\label{sec1}
\subsection{Green function and smoothed distance function}
The \textit{Green function} $G^{\mathbb{R}^n}$ on the Euclidean space $\mathbb{R}^n$ of dimension  $n \ge 3$ is defined by 
\begin{equation}\label{eq:01}
G^{\mathbb{R}^n}(x,y)=G^{\mathbb{R}^n}(|x-y|)=\frac{|x-y|^{2-n}}{n(n-2)\omega_n}
\end{equation}
as a smooth function on $\mathbb{R}^n \times \mathbb{R}^n \setminus \mathrm{diag}(\mathbb{R}^n)$,
where $\mathrm{diag}(\mathbb{R}^n)$ denotes the diagonal part, namely $\mathrm{diag}(\mathbb{R}^n):=\{(x,x) \in \mathbb{R}^n \times \mathbb{R}^n | x \in \mathbb{R}^n\}$, and $\omega_n:=\pi^{\frac{n}{2}}\Gamma (\frac{n}{2}+1)^{-1}$ is the volume of a unit ball in $\mathbb{R}^n$.
It follows from a direct calculation that the Green function solves the equation:
\begin{equation}
\Delta^{\mathbb{R}^n} u=-\delta_x
\end{equation}
as measures, where $\Delta^{\mathbb{R}^n}$ denotes the Laplacian on $\mathbb{R}^n$ and $\delta_x$ is the Dirac measure at $x$. 

Let us denote by $(M^n, g)$ a complete Riemannian manifold of dimension $n$ with non-negative Ricci curvature $\mathrm{Ric}^g \ge 0$.
A result in \cite{Varopoulos} states that 
the existence of a global positive Green function 
 is equivalent to verifying the following:
\begin{equation}\label{nonpara}
\int_1^\infty\frac{r}{\Vol^g B_r^g(x)}\ \di r<\infty, \quad \forall x\in M^n,
\end{equation}
where $B_r^g(x)$ denotes the open ball centered at $x$ of radius $r$ with respect to the Riemannian distance $\dist^g$ by $g$, and $\Vol^g$ denotes the Riemannian volume measure by $g$.
In the sequel, we assume that $(M^n, g)$ is \textit{non-parabolic}, namely (\ref{nonpara}) holds (thus $n\ge 3$).

Then it is well-known that the following asymptotic behavior for the minimal Green function $G_x^g:=G^g(x, \cdot)$ at the pole $x \in M^n$ holds as $\dist_x^g\to 0^+$, where $\dist_x^g:=\dist^g(x, \cdot)$:
\begin{equation}\label{eq:03}
G_x^g=\frac{(\dist_x^g)^{2-n}}{n(n-2)\omega_n}+o((\dist_x^g)^{2-n}),
\end{equation}
namely, the Green function behaves like in the  Euclidean case at small scale (because the tangent space at $x$ is trivially isometric to $\mathbb{R}^n$).

On the other hand, let us recall that the distance function $\dist^g_x$ from a point $x \in M^n$ is smooth outside $x$ and the cut locus. In order to regularize the distance function $\dist^g_x$ to be smooth over the cut locus, we will be able to use the Green function as follows.

\begin{definition}[Smoothed distance function]\label{defsmooth}
Define a smooth function $\bist^g$ on $M^n \times M^n \setminus \mathrm{diag}(M^n)$ by
\begin{equation}\label{smooth dist def}
\bist^g:=\left(G^g\right)^{\frac{1}{2-n}}
\end{equation}
and call $\bist^g$ the \textit{smoothed distance function}. Denote $\bist^g_x:=\bist^g(x, \cdot)$.
\end{definition}

Note that thanks to (\ref{eq:01}), if $(M^n, g)$ is isometric to the Euclidean space $(\mathbb{R}^n, \dist_{\mathrm{Euc}})$ of dimension $n$, then $\bist_x^g$ coincides with the distance function from $x$, up to multiplying a positive dimensional constant:
\begin{equation}
\bist_x^g(y)=\bist^g(x, y)=c_n \omega_n^\frac{1}{n-2} |x-y|,
\end{equation}
where
\begin{equation} 
c_n:=\left(n(n-2)\right)^{\frac{1}{n-2}}.
\end{equation}

It should be emphasized that the smoothed distance function (or its variant) played important roles in recent dvelopments on geometric analysis, for instance, see \cite{CM, AFM, ChLi}.

\subsection{Results}\label{smooth result}
In order to introduce a main result of \cite{HP}, let us recall a rigidity result on the smoothed distance function $\bist_x^g$ obtained in \cite{C12}.
	\begin{theorem}[Sharp gradient estimate and rigidity, \cite{C12}]\label{Colding}
	Let $(M^n,g)$ be a complete non-parabolic (namely (\ref{nonpara}) holds) Riemannian manifold of dimension $n \ge 3$ with non-negative Ricci curvature $\mathrm{Ric}^g \ge 0$. Then for any fixed $x \in M^n$:
	\begin{enumerate}
		\item{(Sharp gradient estimate)} we have for any $y \in M^n \setminus \{x\}$
		\begin{equation}\label{sharp constant2}
		|\nabla \bist_x^g|(y)\le c_n \omega_n^\frac{1}{n-2}:
		\end{equation}
		\item{(Rigidity)} $(M^n,g)$ is isometric to the $n$-dimensional Euclidean space $(\mathbb{R}^n, \dist_{\mathrm{Euc}})$ with $\bist_x^g=c_n \omega_n^\frac{1}{n-2} \dist_x^g$ if the equality of (\ref{sharp constant2}) holds for some $y \in M^n \setminus \{x\}$.
	\end{enumerate}
\end{theorem}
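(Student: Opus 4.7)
Writing $u := \bist_x^g$, so $G_x^g = u^{2-n}$, harmonicity of $G_x^g$ on $M^n\setminus\{x\}$ combined with the chain rule yields the key identity
\[
\Delta u \;=\; (n-1)\,\frac{|\nabla u|^2}{u},
\]
and the asymptotic (\ref{eq:03}) translates into $|\nabla u|(y)\to c_n\omega_n^{1/(n-2)}$ as $y\to x$. The sharp constant in (\ref{sharp constant2}) is therefore nothing other than the ``pole value'' of $|\nabla u|$. My plan is to show that $|\nabla u|^2$ attains its supremum only in this limit, via a maximum-principle argument applied to a carefully chosen $P$-function on $M^n\setminus\{x\}$.

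\textbf{Gradient estimate via Bochner.} Applying the Bochner identity to $u$, inserting the formula for $\Delta u$ above, using $\mathrm{Ric}^g\ge 0$, and invoking a refined Cauchy--Schwarz bound on $|\nabla^2 u|^2$ (obtained by decomposing the Hessian in a frame whose last basis vector is $\nabla u/|\nabla u|$) produces, for $Q:=|\nabla u|^2$, a schematic differential inequality of the form
\[
\tfrac{1}{2}\Delta Q - \tfrac{n-1}{u}\langle\nabla u,\nabla Q\rangle \;\ge\; -\tfrac{n-1}{n}\,\tfrac{Q^2}{u^2} + (\text{refined-Kato square}) + \mathrm{Ric}^g(\nabla u,\nabla u).
\]
The ``wrong-sign'' term $-\tfrac{n-1}{n}Q^2/u^2$ must be absorbed by reweighting: I would replace $Q$ by $P:=Q\cdot\phi(u)$, where $\phi$ is dictated by the Euclidean scaling (so that $P$ is \emph{constant} in the Euclidean model). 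The cross terms involving $\phi'$ then cancel the bad quadratic term, and $P$ satisfies a genuine weak subharmonic inequality with respect to an appropriate drift Laplacian. The Omori--Yau maximum principle at infinity (valid under $\mathrm{Ric}^g\ge 0$), combined with (\ref{eq:03}) as the ``boundary value'' at $x$, then yields $P\le\lim_{z\to x}P(z)$, which unwinds to (\ref{sharp constant2}).

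\textbf{Rigidity.} Equality in (\ref{sharp constant2}) at some $y_0\ne x$ means $P$ attains its supremum at an interior point; the strong maximum principle forces $P$ to be constant on the connected component containing $y_0$, hence on all of $M^n\setminus\{x\}$. Propagating this equality back through the Bochner chain yields $\mathrm{Ric}^g(\nabla u,\nabla u)\equiv 0$, saturation of the refined Cauchy--Schwarz, and the Euclidean-radial Hessian identity
\[
\nabla^2 u \;=\; \frac{1}{u}\bigl(c_n^2\omega_n^{2/(n-2)}\,g - du\otimes du\bigr).
\]
Under this identity $\nabla_{\nabla u}\nabla u=0$, so the flow lines of $\nabla u$ are unit-speed (up to scale) geodesics emanating from $x$, and the level sets of $u$ are totally umbilical round spheres. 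Integrating the Hessian equation along polar coordinates (equivalently, invoking metric-cone rigidity) realizes $(M^n,g)$ as a metric cone over a cross-section $\Sigma^{n-1}$; the volume normalization built into the asymptotic (\ref{eq:03}) then forces $\Sigma^{n-1}$ to be the round unit sphere, so $(M^n,g)\cong(\mathbb R^n,\dist_{\mathrm{Euc}})$.

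\textbf{Main obstacle.} The central difficulty is discovering the correct $P$-function so that the Bochner computation closes off. The naive choice $P=Q$ retains the ``wrong-sign'' term $-\tfrac{n-1}{n}Q^2/u^2$, and absorbing it requires simultaneously exploiting (a)~the conformal structure $\Delta u = (n-1)Q/u$ specific to the Green function, and (b)~the \emph{full} refined Cauchy--Schwarz for $|\nabla^2 u|^2$ (not merely the weak form $|\nabla^2 u|^2\ge(\Delta u)^2/n$, which is not saturated on $\mathbb R^n$ in this setting). A secondary subtlety lies in the pole itself: the maximum principle is applied on the non-compact domain $M^n\setminus\{x\}$, and the ``boundary value'' at $x$ is supplied solely by the precise asymptotic expansion (\ref{eq:03}), which is exactly what pins down the universal sharp value $c_n\omega_n^{1/(n-2)}$ rather than a manifold-dependent one.
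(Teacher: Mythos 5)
Your strategy --- maximum principle for a suitably reweighted gradient quantity, then strong maximum principle plus cone rigidity for the equality case --- is the same broad plan as the paper's (and as in \cite{C12}), but the plan is incomplete at exactly the point you flag as the ``Main obstacle,'' and the selection criterion you propose for $\phi$ is in fact inconsistent with fixing that obstacle. You ask that $P=Q\phi(u)$ be \emph{constant} in the Euclidean model; but since $Q=|\nabla u|^2$ is already constant there, this forces $\phi\equiv\mathrm{const}$, i.e.\ $P=Q$ up to scale --- the naive choice you have already conceded does not close the Bochner computation. The $P$-function that actually works, and the one the paper uses, is
\[
P:=G_x^g\,|\nabla\bist_x^g|^2=u^{2-n}Q,
\]
which on $\mathbb{R}^n$ is \emph{not} constant but \emph{harmonic} (a multiple of the Green function). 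The clean way to see that this $P$ is subharmonic is not to fight a refined Kato inequality for $\mathrm{Hess}_u$ in a $\nabla u$-adapted frame, but to apply Bochner to $(\bist_x^g)^2=u^2$ instead of to $u$: since $\Delta^g u^2=2n|\nabla u|^2$ on $M^n\setminus\{x\}$, one obtains the exact identity
\[
\Delta^g\left(G_x^g|\nabla \bist^g_x|^2\right)=\frac{1}{2}\left( \left|\mathrm{Hess}^g_{(\bist_x^g)^2}-\frac{\Delta^g(\bist_x^g)^2}{n}g\right|^2+\mathrm{Ric}^g\left(\nabla (\bist_x^g)^2, \nabla (\bist_x^g)^2\right)\right)\left(\bist_x^g\right)^{-n},
\]
which is nonnegative under $\mathrm{Ric}^g\ge0$ with no Cauchy--Schwarz input at all: the only inequality used is that the squared traceless Hessian is nonnegative. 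Passing from $u$ to $u^2$ is the device you are missing; it turns your ``schematic'' inequality into an identity and removes the bad quadratic term by algebra rather than by absorption.

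Two secondary points. First, since $P=u^{2-n}Q\to\infty$ as one approaches the pole, your proposed conclusion ``$P\le\lim_{z\to x}P(z)$ via Omori--Yau'' is vacuous for the correct $P$; the paper instead runs a weak maximum principle on a large annulus, using the decay of $G_x^g$ at infinity on the outer boundary and the asymptotic (\ref{eq:03}) on the inner one, to conclude $|\nabla\bist_x^g|^2\le (c_n\omega_n^{1/(n-2)}+\eps)^2$ for every $\eps>0$. Second, in the rigidity step the paper never needs ``saturation of refined Cauchy--Schwarz'': once $|\nabla\bist_x^g|$ is constant, $\Delta^g(\bist_x^g)^2=2n|\nabla\bist_x^g|^2$ is a dimensional constant and the traceless part of $\mathrm{Hess}^g_{(\bist_x^g)^2}$ must vanish by the identity above, so $\mathrm{Hess}^g_{(\bist_x^g)^2}$ is a constant multiple of $g$; cone rigidity plus smoothness at the vertex then give $\mathbb{R}^n$, which agrees with your final paragraph.
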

	See also \cite{HXY} for a related work.
	From the theorem above, it is natural to ask the following question (Q).
	\begin{itemize}
	\item[(Q)]\label{Q} \,\,If $|\nabla \bist_x^g|(y)$ is close to $c_n \omega_n^\frac{1}{n-2}$ at some point $y \in M^n \setminus \{x\}$, then can we conclude that the manifold is pointed Gromov-Hausdorff (pGH) close to $\mathbb{R}^n$? 
\end{itemize}
In connection with this question (Q), it is worth mentioning that we have
\begin{equation}\label{as shar}
|\nabla \bist_x^g|(y) \to  c_n\omega_n^\frac{1}{n-2}
\end{equation}
whenever $y \to x$ because of (\ref{eq:03}). Therefore in order to give a positive answer to the question (Q), we need to find an additional assumption on $y$.

We are now in a position to introduce a main result along this direction, obtained in \cite{HP}. 
We denote by $\dist_{\mathrm{pmGH}}$ a fixed distance metrizing the pointed measured Gromov-Hausdorff (pmGH) convergence.

\begin{theorem}[Almost rigidity, Part I: \cite{HP}]\label{thm:smooth almost}
For any integer $n \ge 3$, all $0<\eps<1$, $0<r<R$, $1 \le p <\infty$ and $\phi \in L^1([0,\infty), \mathcal{L}^1)$ there exists $\delta:=\delta(n, \eps, r, R, p, \phi)>0$ such that if 
a complete Riemannian manifold $(M^n, g)$ of dimension $n$ with non-negative Ricci curvature $\mathrm{Ric}^g \ge 0$ satisfies
\begin{equation}\label{s8syshsbb}
\frac{s}{\Vol^g B_s^g(x)} \le \phi(s),\quad \text{for $\mathcal{L}^1$-a.e. $s \in [1, \infty)$}
\end{equation}
for some $x \in M^n$ and that
\begin{equation}\label{almost max point}
c_n \omega_n^\frac{1}{n-2}-|\nabla \bist_x^g|(y) \le \delta 
\end{equation}
holds for some $y \in B_R^g(x) \setminus B_r^g(x)$, then we have 
\begin{align}\label{r lower}
\dist_{\mathrm{pmGH}}\left( (M^n, \dist^g, \mathrm{Vol}^g, x), (\mathbb{R}^n, \dist_{\mathbb{R}^n}, \mathcal{L}^n, 0_n) \right)\le \eps
\end{align}
and 
\begin{equation}\label{impsosirbb}
\left\| \bist_x^g-c_n \omega_n^\frac{1}{n-2}\dist_x^g\right\|_{L^{\infty}(B_R^g(x))}+ \left\| \bist_x^g-c_n \omega_n^\frac{1}{n-2}\dist_x^g\right\|_{H^{1, p}(B_R^g(x))}\le \eps
\end{equation}
in particular
\begin{equation}\label{elpest222}
\|c_n \omega_n^\frac{1}{n-2}-|\nabla \bist_x^g| \|_{L^p(B_R^g(x))} \le \eps.
\end{equation}
\end{theorem}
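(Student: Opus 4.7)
The plan is to argue by contradiction, combining pointed measured Gromov--Hausdorff (pmGH) compactness with a stability result for minimal Green functions along such limits and an $\RCD(0,n)$ version of Theorem \ref{Colding}.

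\textbf{Contradiction and compactness.} I would assume the statement fails for some $n,\eps,r,R,p,\phi$: there is a sequence $(M_i^n,g_i,x_i,y_i)$ with $\mathrm{Ric}^{g_i}\ge 0$, satisfying (\ref{s8syshsbb}) with the same $\phi$, points $y_i\in B_R^{g_i}(x_i)\setminus B_r^{g_i}(x_i)$, and $|\nabla\bist_{x_i}^{g_i}|(y_i)\to c_n\omega_n^{1/(n-2)}$, but for which either (\ref{r lower}) or (\ref{impsosirbb}) fails for every $i$. By Gromov precompactness and stability of curvature-dimension bounds, I would extract a subsequence along which $(M_i^n,\dist^{g_i},\mathrm{Vol}^{g_i},x_i)$ converges in pmGH to an $\RCD(0,n)$ space $(X,\dist,\meas,x_\infty)$, with $y_i\to y_\infty\in\overline{B_R(x_\infty)}\setminus B_r(x_\infty)$ (the lower bound $r>0$ keeps $y_\infty\ne x_\infty$). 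Passing (\ref{s8syshsbb}) to the limit yields the same bound $s/\meas(B_s(x_\infty))\le\phi(s)$ for a.e.\ $s\ge 1$, so that $X$ is non-parabolic and admits its own minimal Green function $G_{x_\infty}$ and smoothed distance $\bist_{x_\infty}=G_{x_\infty}^{1/(2-n)}$.

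\textbf{Limit identification and Euclidean rigidity.} The core step is to invoke the stability of minimal Green functions under pmGH convergence with a uniform modulus of non-parabolicity (the output of \cite{HP}): $G_{x_i}^{g_i}\to G_{x_\infty}$ locally uniformly on $X\setminus\{x_\infty\}$ and in a Mosco / $H^{1,p}_{\mathrm{loc}}$ sense, which after the smooth change of variables $(\cdot)^{1/(2-n)}$ away from poles transfers to $\bist_{x_i}^{g_i}\to\bist_{x_\infty}$ in the same senses on an annulus around $y_\infty$. Combining this with the upper semicontinuity of the minimal relaxed slope along pmGH convergence upgrades the sharpness hypothesis to $|\nabla\bist_{x_\infty}|(y_\infty)\ge c_n\omega_n^{1/(n-2)}$. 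The $\RCD(0,n)$ extension of Theorem \ref{Colding}(1)--(2) (also contained in \cite{HP}) then forces equality at $y_\infty\ne x_\infty$ and identifies $(X,\dist,\meas,x_\infty)\cong(\mathbb{R}^n,\dist_{\mathrm{Euc}},\mathcal{L}^n,0_n)$ with $\bist_{x_\infty}=c_n\omega_n^{1/(n-2)}\dist_{x_\infty}$, contradicting the failure of (\ref{r lower}). For (\ref{impsosirbb}) I would note that in this Euclidean limit the difference $\bist_{x_\infty}-c_n\omega_n^{1/(n-2)}\dist_{x_\infty}$ vanishes identically on $B_R(x_\infty)$, so the prelimit differences tend to zero in $L^\infty(B_R^{g_i}(x_i))$; and since Theorem \ref{Colding}(1) gives the uniform bound $|\nabla\bist_{x_i}^{g_i}|\le c_n\omega_n^{1/(n-2)}$ while the pointwise limit equals the constant $c_n\omega_n^{1/(n-2)}$, dominated convergence delivers the gradient component (\ref{elpest222}) and, combined with the $L^\infty$ part, the full $H^{1,p}$ bound in (\ref{impsosirbb}).

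\textbf{Main obstacle.} The deepest ingredient is the stability of Green functions in the second paragraph: under the uniform quantitative non-parabolicity provided by $\phi$ one must show that the $G_{x_i}^{g_i}$ converge to an honest \emph{minimal} Green function on $X$, rather than degenerating, losing mass at infinity, or splitting off a parabolic direction. Controlling the tail $\int_T^\infty p_t\di t$ uniformly in $i$ via $\phi$, and identifying the limit of the minimal Green functions as itself the minimal one on $X$, is the engine of the whole argument; this is where the volume-growth assumption (\ref{s8syshsbb}) --- rather than mere non-parabolicity of each $M_i^n$ --- is indispensable, and also what forces the quantitative dependence $\delta=\delta(n,\eps,r,R,p,\phi)$.
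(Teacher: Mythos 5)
Your high-level strategy matches the paper's: argue by contradiction, use pmGH compactness, invoke the stability of Green functions under convergence (this is Proposition~\ref{contigreeeeen}) and the $\RCD(0,N)$ version of the sharp gradient estimate and rigidity (Theorem~\ref{rcdsharp}), then conclude via the rigidity of the Bishop inequality. However, there is a genuine gap at the pivotal step.

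\textbf{The gap.} You claim that the ``upper semicontinuity of the minimal relaxed slope along pmGH convergence'' upgrades the hypothesis $|\nabla\bist_{x_i}^{g_i}|(y_i)\to c_n\omega_n^{1/(n-2)}$ into the pointwise bound $|\nabla\bist_{x_\infty}|(y_\infty)\ge c_n\omega_n^{1/(n-2)}$ at the limit point. This is the crux of the whole argument and as stated it does not hold. The minimal relaxed slope is an $L^2$-equivalence class; even with $H^{1,2}_{\mathrm{loc}}$-strong convergence of $\bist_{x_i}^{g_i}\to\bist_{x_\infty}$ (which gives $L^2_{\mathrm{loc}}$ convergence of the gradients), nothing prevents the set where $|\nabla\bist_{x_i}^{g_i}|$ is near-maximal from collapsing to measure zero, so the single pointwise datum at the moving points $y_i$ could vanish in the limit. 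There is no general ``upper semicontinuity'' result that lets you evaluate the limit relaxed slope at a point and compare it against a $\liminf$ of pointwise evaluations along the sequence.

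\textbf{What the paper does instead.} The actual proof exploits the \emph{subharmonicity} of $G^{g_i}_{x_i}|\nabla\bist_{x_i}^{g_i}|^2$ established via the Bochner formula in~(\ref{ss8sshhshhs}) (this is the same computation behind Theorem~\ref{Colding}). Because of the sharp gradient estimate, the function $G^{g_i}_{x_i}\big((c_n\omega_n^{1/(n-2)})^2-|\nabla\bist_{x_i}^{g_i}|^2\big)$ is a non-negative superharmonic function which is small at $y_i$. The weak Harnack inequality then converts this \emph{pointwise} smallness at $y_i$ into a quantitative $L^1$ estimate on a ball $B_r(y_i)$, namely
\begin{equation*}
\int_{B_r(y_i)}\bigl||\nabla\bist_{x_i}^{g_i}|-c_n\omega_n^{1/(n-2)}\bigr|\di\Vol^{g_i}\to 0,
\end{equation*}
and such an integral estimate \emph{does} pass to the limit under $H^{1,2}_{\mathrm{loc}}$-strong convergence, yielding $|\nabla\bist_{x_\infty}|(z)=c_n\omega_n^{1/(n-2)}$ for $\haus^n$-a.e.\ $z\in B_r(y_\infty)$. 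Only then can one invoke the rigidity in Theorem~\ref{rcdsharp} (combined with the rigidity of the Bishop inequality from \cite{DePhillippisGigli}) to conclude that the limit is Euclidean. This propagation step --- from one point to a ball using the weak Harnack inequality applied to the subharmonic quantity --- is what is missing from your proposal; your identification of the Green-function stability as the ``deepest ingredient'' is only half right, as that stability on its own does not close the argument. A secondary gap: you do not record that the limit is non-collapsed (which follows from~(\ref{s8syshsbb}) giving a lower volume bound), which is needed to invoke the rigidity of the Bishop inequality and to rule out that the limit is merely a lower-dimensional cone.
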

Note that though Theorem \ref{thm:smooth almost} is stated in the smooth category, this is also valid in a non-smooth category, so-called (non-collapsed) RCD spaces. See \cite{HP} for the details.

In this paper we also provide a related almost rigidity (Theorem \ref{almost rigi}) and a Cheeger-Yau type comparison result for the heat kernels (Proposition \ref{CheegerYau}) in the non-smooth category. They might be useful for readers, though they are also direct consequences of previously known results.

\section{Proof}\label{sec2}
Firstly let us  recall the proof of Theorem \ref{Colding}.
\subsection{Proof of Theorem \ref{Colding}}\label{subColding}
Fix a complete non-parabolic Riemannian manifold $(M^n, g)$ of dimension $n\ge 3$ with non-negative Ricci curvature $\mathrm{Ric}^g \ge 0$. 

Let us prove the sharp gradient estimate (\ref{sharp constant2}) for the smoothed distance function $\bist^g_x:M^n \to [0, \infty)$ from a point $x \in M^n$, defined in (\ref{smooth dist def}). 
Applying the Bochner formula for $(\bist_x^g)^2$, we know that $G_x^g|\nabla \bist^g_x|^2$ is subharmonic because of the following calculation:
\begin{align}\label{ss8sshhshhs}
\Delta^g\left(G_x^g|\nabla \bist^g_x|^2\right)&=\frac{1}{2}\left( \left|\mathrm{Hess}^g_{(\bist_x^g)^2}-\frac{\Delta^g(\bist_x^g)^2}{n}g\right|^2+\mathrm{Ric}^g\left(\nabla (\bist_x^g)^2, \nabla (\bist_x^g)^2\right)\right)\left(\bist_x^g\right)^{-n}\nonumber \\
& \ge 0,
\end{align}
due to \cite[Lemma 2.7]{C12}.

 Therefore considering a large anulus, the weak maximum principle yields that it is enough to show the sharp gradient estimate (\ref{sharp constant2}) near $x$ because we can also get a decay estimate on $G_x^g$.
Then recalling (\ref{as shar}), we can reach the sharp gradient estimate (\ref{sharp constant2}) on $M^n \setminus \{x\}$.

Secondly let us prove the rigidity result. Assume $|\nabla \bist_x^g|(y)=c_n \omega_n^{\frac{1}{n-2}}$ for some $y \in M^n \setminus \{x\}$. Recalling the proof above, the strong maximum principle allows us to conclude that $|\nabla \bist_x^g|$ must be a constant, namely $|\nabla \bist_x^g|\equiv c_n \omega_n^{\frac{1}{n-2}}$ on $M^n \setminus \{x\}$. Thus by (\ref{ss8sshhshhs}), we know that $\mathrm{Hess}_{(\bist_x^g)^2}$ is equal to $g$ up to multiplying a positive dimensional constant because $\Delta^g (\bist_x^g)^2=2n|\nabla \bist_x^g|^2$ is also a dimensional constant. This implies that $(M^n, g)$ is isometric to a metric cone with the warping function $(\bist_x^g)^2$. Since any smooth metric cone must be isometric to a Euclidean space because of the scale invariance of the metric cone, the proof of Theorem \ref{Colding} is completed. 
\subsection{$\RCD$ spaces}
In order to prove Theorem \ref{thm:smooth almost}, we need to provide a generalization of Theorem \ref{Colding} to non-smooth spaces with Ricci curvature bounded below, so-called $\RCD(0, N)$ spaces. 
Therefore let us give a brief introduction on $\RCD(K, N)$ spaces. We refer \cite{A, GIGLI, STURM} as recent nice surveys.
\subsubsection{Definition}
Fix a complete separable metric space $(X, \dist)$ and a Borel measure $\meas$ on $X$ with full support, namely $\meas(U)>0$ holds for any non-empty open subset $U$ of $X$. 
Moreover we assume that $\meas$ is finite for any bounded Borel subset. Such a triple $(X, \dist, \meas)$ is called a \textit{metric measure space}.

The \textit{Cheeger energy} $\Ch:L^2(X, \meas) \to [0, \infty]$ is defined by
\begin{equation}\label{cheegert}
\Ch(f)=\inf_{\{f_i\}_i}\left\{ \liminf_{i\to \infty}\frac{1}{2}\int_X\left(\mathrm{Lip}f_i(x)\right)^2\di \meas(x) \right\},
\end{equation}
where the infimum $\{f_i\}_i$ runs over all bounded Lipschitz $L^2$-functions with  $\|f_i - f\|_{L^2} \to 0$ as $i \to \infty$, and $\mathrm{Lip}f(x)$ denotes the local slope of $f$ at $x$ defined by:
\begin{equation}
\mathrm{Lip}f(x):=\limsup_{y \to x}\frac{|f(x)-f(y)|}{\dist(x, y)}.
\end{equation}
The \textit{Sobolev space} $H^{1,2}=H^{1,2}(X, \dist, \meas)$ is defined by the finiteness domain of $\Ch$ and then it is a Banach space equipped with the norm $\|f\|_{H^{1,2}}:=(\|f\|_{L^2}^2+2\Ch(f))^{\frac{1}{2}}$. 

For any $f \in H^{1,2}$, considering the collection $R(f)$ of all functions in $L^2(X, \meas)$ larger than or equal to a weak $L^2$-limit of $\mathrm{Lip}f_i(x)$ appeared in (\ref{cheegert}), we know that $R(f)$ is a closed convex non-empty subset in $L^2(X, \meas)$. Denoting by $|\nabla f|$ the unique element of $R(f)$ with the smallest $L^2$-norm, called the \textit{minimal relaxed slope} of $f$, we have

\begin{equation}
\Ch(f)=\frac{1}{2}\int_X|\nabla f|^2\di \meas.
\end{equation}

The metric measure space $(X, \dist, \meas)$ is called \textit{infinitesimally Hilbertian} (IH) if the Sobolev space $H^{1,2}$ is a Hilbert space. Assume that $(X, \dist, \meas)$ is IH below.

For all $f, h \in H^{1,2}$,
\begin{equation}
\langle \nabla f, \nabla h\rangle := \lim_{t \to 0}\frac{|\nabla (f+th)|^2-|\nabla f|^2}{2t}
\end{equation}
determines an $L^1$-function on $X$. We denote by $D(\Delta)$ the domain of the Laplacian, namely, $f \in D(\Delta)$ holds if and only if $f \in H^{1,2}$ and there exists a unique $\phi \in L^2(X, \meas)$, denoted by $\Delta f$, such that
\begin{equation}
\int_X\langle \nabla f, \nabla h\rangle \di \meas=-\int_X\phi h\di \meas,\quad \text{for any $h \in H^{1,2}$.}
\end{equation}

We are now in a position to introduce the precise definition of $\RCD(K, N)$ spaces.
\begin{definition}[$\RCD(K, N)$ space]
Under the same notation as above (recall that we assume IH for $(X, \dist, \meas)$), the metric measure space $(X, \dist, \meas)$ is called an $\RCD(K, N)$ \textit{space} for some $K \in \mathbb{R}$ and some $N \in [1, \infty]$ if the following three conditions are satisfied:
\begin{enumerate}
\item there exist a positive constant $C>1$ and a point $x \in X$ such that 
\begin{equation}
\meas(B_r(x))\le C\exp (Cr^2),\quad \text{for any $r>1$:}
\end{equation}
\item if a Sobolev function $f \in H^{1,2}$ satisfies $|\nabla f|(x) \le 1$ for $\meas$.a.e. $x \in X$, then $f$ has a $1$-Lipschitz representative:
\item we have the Bochner inequality:
\begin{equation}\label{s8sabasy}
\frac{1}{2}\int_X|\nabla f|^2 \cdot \Delta \phi \di \meas \ge \int_X\phi \left( \frac{(\Delta f)^2}{N} +\langle \nabla \Delta f, \nabla f\rangle +K|\nabla f|^2 \right)\di \meas
\end{equation}
for all $\phi \in L^{\infty}(X, \meas) \cap D(\Delta)$ with $\Delta \phi \in L^{\infty}(X, \meas)$, and $f \in D(\Delta)$ with $\Delta f \in H^{1,2}$.
\end{enumerate}
\end{definition}
Roughly speaking, $(X, \dist, \meas)$ is an $\RCD(K, N)$ space if Ricci curvature is bounded below by $K$ and dimension is bounded above by $N$ in a synthetic sense. Note that $N$ is not necessarily to be an integer. 
See \cite{Sturm06, Sturm06b, LottVillani, AmbrosioGigliSavare14, AmbrosioGigliMondinoRajala, ErbarKuwadaSturm, Gigli1, AmbrosioMondinoSavare, CavallettiMilman,  LI} for other equivalent definitions.

Let us end this subsection by giving a special class of $\RCD(K, N)$ spaces introduced in \cite{DePhillippisGigli}.
\begin{definition}[Non-collapsed $\RCD(K, N)$ space]
An $\RCD(K, N)$ space $(X, \dist, \meas)$ for some $K \in \mathbb{R}$ and some finite $N \in [1, \infty)$ is said to be \textit{non-collapsed} if $\meas$ coincides with the $N$-dimensional Hausdorff measure $\haus^N$.
\end{definition}

\subsubsection{Non-parabolic $\RCD(0, N)$ space}
The main purpose of this subsection is to provide fundamental results on $\RCD(0, N)$ spaces, without the proofs. 

Let $(X, \dist, \meas)$ be an $\RCD(0, N)$ space for some finite $N \ge 1$. Such typical examples include \textit{$N$-metric measure cones} over $\RCD(N-2, N-1)$ spaces, due to \cite{Ketterer2}. 
For example, a metric measure space for all $C>0$ and $N \in [1, \infty)$:
\begin{equation}\label{halfline}
\left([0, \infty), \dist_{\mathrm{Euc}}, Cx^{N-1}\di x\right)
\end{equation}
is the $N$-metric measure cone over a single point, thus it is an $\RCD(0, N)$ space.

Recall that the Bochner inequality (\ref{s8sabasy}) for $(X, \dist, \meas)$ means:
\begin{equation}\label{isnshshshs}
\frac{1}{2}\Delta|\nabla f|^2\ge \frac{(\Delta f)^2}{N}+\langle \nabla \Delta f, \nabla f\rangle 
\end{equation}
in a weak sense. In connection with this,
it should  be emphasized that the Bochner inequality including the (well-defined) \textit{Hessian} term:
\begin{equation}\label{bochner}
\frac{1}{2}\Delta |\nabla f|^2\ge |\mathrm{Hess}_f|^2+\langle \nabla \Delta f, \nabla f\rangle
\end{equation}
also holds in a weak sense. See \cite{Gigli}. This will play key roles to get structure results on $(X, \dist, \meas)$.

Fundamental properties on $(X, \dist, \meas)$ include the \textit{Bishop-Gromov inequality} \cite{Sturm06b, LottVillani}: 
\begin{equation}\label{bgbgbg}
\frac{\meas(B_r(x))}{r^N}\ge \frac{\meas(B_s(x))}{s^N},\quad \text{for all $x \in X$ and $0<r<s$.}
\end{equation}
In particular the \textit{$N$-volume density} $\nu_x=\nu_x^N$ at a point $x \in X$:
\begin{equation}\label{vol density}
\nu_x:=\lim_{r \to 0^+}\frac{\meas (B_r(x))}{r^N} \in (0, \infty]
\end{equation}
is well-defined. It is known that if $\nu_x$ is finite, then any tangent cone at $x$ is isomorphic to the $N$-metric measure cone over an $\RCD(N-2, N-1)$ space because of \cite{DG} (see also \cite{GV}). 
Note that if $(X, \dist, \meas)$ is non-collapsed, then $N$ is an integer, the \textit{Bishop inequality} holds in the sense: 
\begin{equation}\label{asasj8a8s}
\nu_x \le \omega_N
\end{equation} 
and the rigidity is also satisfied; the equality in (\ref{asasj8a8s}) holds if and only if $x$ is an $N$-regular point. See \cite{DePhillippisGigli}.

Let us introduce the \textit{non-parabolicity} of $(X, \dist, \meas)$ as in the smooth case \cite{Varopoulos}.  See \cite{BrueSemola, CMon}.
\begin{definition}[Non-parabolicity]
We say that an $\RCD(0, N)$ space $(X, \dist, \meas)$ for some finite $N \ge 1$ is \textit{non-parabolic} if 
\begin{equation}\label{s9snsnbs}
\int_1^{\infty}\frac{r}{\meas(B_r(x))}\di r<\infty
\end{equation}
holds for some (or equivalently any) $x \in X$.
\end{definition}
Note that (\ref{s9snsnbs}) imples $N>2$ because of (\ref{bgbgbg}).

We can also define the \textit{Green}/\textit{smoothed distance functions} as follows, where they are well-defined because of a \textit{Gaussian estimate} for the \textit{heat kernel} $p=p^X$ of $(X, \dist, \meas)$ established in \cite{JiangLiZhang}. 
\begin{definition}[Green/smoothed distance functions]
Assume that $(X, \dist, \meas)$ is a non-parabolic $\RCD(0, N)$ space for some finite $N >2$. Then the \textit{Green function} $G=G^X:X \times X \setminus \mathrm{diag}(X) \to (0, \infty)$ of $(X, \dist, \meas)$ is defined by
\begin{equation}
G(x,y):=\int_0^{\infty}p(x,y,t)\di t.
\end{equation}
We also define the \textit{smoothed} ($N$-)\textit{distance function} $\bist_x$ from a point $x \in X$ by;
\begin{equation}\label{smdist}
\bist_x:=G(x, \cdot)^{\frac{1}{2-N}}.
\end{equation}
\end{definition}
Then we are now in a position to introduce a non-smooth analogue of Theorem \ref{Colding} which plays a key role in the proof of Theorem \ref{thm:smooth almost}. 
\begin{theorem}[Sharp gradient estimate and rigidity]\label{rcdsharp}
Assume that $(X, \dist, \meas)$ is  a non-parabolic $\RCD(0, N)$ space for some finite $N >2$. Then for any point $x \in X$ with $\nu_x<\infty$:
\begin{enumerate}
		\item{(Sharp gradient estimate)} we have 
		\begin{equation}\label{sharp constant}
		|\nabla \bist_x|(y)\le c_N \nu_x^\frac{1}{N-2}
		\end{equation} for any $y \in X \setminus \{x\}$, where $c_N:=(N(N-2))^{\frac{1}{N-2}}$ and $|\nabla \bist_x|$ has the canonical pointwise representative:
		\item{(Rigidity)} $(X,\dist, \meas, x)$ is isomorphic to the $N$-metric measure cone with the pole over an $\RCD(N-2, N-1)$ space with $\bist_x=c_N  \nu_x^\frac{1}{N-2}\dist_x$ if the equality of (\ref{sharp constant}) holds for some $y \in X \setminus \{x\}$.
	\end{enumerate}
	\end{theorem}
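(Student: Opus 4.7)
The plan is to mimic the smooth argument of Subsection~\ref{subColding}, replacing the classical Bochner identity by the Hessian-version of the Bochner inequality \eqref{bochner} on $\RCD(0,N)$ spaces and the smooth asymptotics of $G_x^g$ by the heat-kernel Gaussian bounds from \cite{JiangLiZhang} together with the tangent-cone structure results of \cite{DG,GV}.

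First, I would establish the distributional inequality
\begin{equation*}
\Delta \left(G_x\,|\nabla \bist_x|^2\right) \ge 0 \quad \text{on } X\setminus\{x\},
\end{equation*}
by applying \eqref{bochner} to $f=(\bist_x)^2$ and using that $G_x$ is harmonic off $x$. Since $(\bist_x)^2 = G_x^{2/(2-N)}$, the chain rule in the RCD calculus of \cite{Gigli} gives $\Delta (\bist_x)^2 = 2N|\nabla \bist_x|^2$ away from $x$, so the algebraic manipulation in \eqref{ss8sshhshhs} goes through verbatim once one uses the trace inequality $|\mathrm{Hess}_f|^2\ge (\Delta f)^2/N$ to absorb the dimensional term. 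This requires testing \eqref{bochner} against suitable non-negative cutoffs supported away from $x$ and exploiting $\mathrm{Ric}\ge 0$ encoded by the $\RCD(0,N)$ condition.

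Second, I would identify the correct boundary value at the pole. Because $\nu_x<\infty$, every tangent cone at $x$ is an $N$-metric measure cone over an $\RCD(N-2,N-1)$ space by \cite{DG,GV}. Combined with the Gaussian upper/lower estimates on $p(x,y,t)$ from \cite{JiangLiZhang} and a standard rescaling argument, this yields the asymptotic
\begin{equation*}
|\nabla \bist_x|(y) \longrightarrow c_N \nu_x^{\frac{1}{N-2}} \quad \text{as } y\to x,
\end{equation*}
which is the RCD-version of \eqref{eq:03}. Since non-parabolicity \eqref{s9snsnbs} forces $G_x(y)\to 0$ as $\dist(x,y)\to\infty$, applying the weak maximum principle to the subharmonic function $G_x\,|\nabla \bist_x|^2$ on large annuli $B_R(x)\setminus B_r(x)$ gives the sharp gradient estimate \eqref{sharp constant}. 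The canonical pointwise representative is obtained by upgrading $\bist_x$ to a locally Lipschitz function via the $\meas$-a.e. bound on $|\nabla \bist_x|$ together with the Sobolev-to-Lipschitz property built into the $\RCD$ axioms.

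Third, for the rigidity statement, equality at some $y\in X\setminus\{x\}$ combined with the strong maximum principle for subharmonic functions on $\RCD(0,N)$ spaces (Harnack inequality of \cite{JiangLiZhang}) forces $G_x\,|\nabla \bist_x|^2$, and hence $|\nabla \bist_x|$, to be constant on $X\setminus\{x\}$. Feeding this back into \eqref{bochner} makes that inequality an equality, which forces the Hessian of $(\bist_x)^2$ to be a constant multiple of the ``metric tensor'' in the sense of \cite{Gigli}. By Ketterer's characterization \cite{Ketterer2} of metric measure cones through the existence of such a radial function, $(X,\dist,\meas,x)$ is isomorphic to an $N$-metric measure cone with pole $x$, and the cross-section is an $\RCD(N-2,N-1)$ space, yielding the desired rigidity.

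The main obstacle will be the first step: because $\bist_x$ blows up at $x$, the computation of $\Delta(G_x|\nabla \bist_x|^2)$ has to be carried out with care in the weak formulation of \eqref{bochner}, and one must ensure enough regularity of $\bist_x$ away from $x$ (Lipschitz regularity of $|\nabla \bist_x|^2$, membership in the domain of $\Delta$, etc.) to justify the chain-rule manipulations that are automatic in the smooth setting. The second delicate point is the passage from ``$\mathrm{Hess}_{(\bist_x)^2}$ proportional to the metric'' to the metric cone conclusion, where one must verify that Ketterer's splitting/cone theorem is applicable with $(\bist_x)^2$ playing the role of the squared radial distance.
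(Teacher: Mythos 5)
Your proposal follows essentially the same route the paper sketches (and spells out in the smooth case, Subsection~\ref{subColding}): subharmonicity of $G_x|\nabla\bist_x|^2$ via the Hessian--Bochner inequality \eqref{bochner}, maximum principle on annuli combined with the pole asymptotics and the decay of $G_x$ at infinity, strong maximum principle plus equality in Bochner for the rigidity, and the Sobolev-to-Lipschitz axiom for the pointwise representative of $|\nabla\bist_x|$. One correction to the references: the implication ``$\mathrm{Hess}_{(\bist_x)^2}$ proportional to the metric $\Rightarrow$ $N$-metric measure cone'' in the $\RCD$ setting is the ``from volume/Hessian to metric cone'' circle of results (De Philippis--Gigli \cite{DG}, Gigli--Violo \cite{GV}), not Ketterer \cite{Ketterer2}, whose theorem gives the converse implication (cones over $\RCD(N-2,N-1)$ spaces are $\RCD(0,N)$); likewise, the strong maximum principle for the subharmonic quantity belongs to the nonlinear potential theory of \cite{BjornBjorn}, which the paper explicitly credits with handling the technical difficulties you flag, rather than to the parabolic Harnack estimate of \cite{JiangLiZhang}.
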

	The proof follows from similar arguments as done in \cite{C12} via the Bochner inequalities (\ref{isnshshshs}) and (\ref{bochner}), however new technical difficulties appear because of the lack of the smoothness. It is worth mentioning that the non-linear potential theory on metric measure spaces helps us to overcome such difficulties. See \cite{BjornBjorn}.
	
	In connection with the rigidity of Theorem \ref{rcdsharp}, it is not hard to check that if $(X, \dist, \meas, x)$ is isomorphic to the $N$-metric measure cone with the pole over an $\RCD(N-2, N-1)$ space, then the heat kernel $p$ satisfies
\begin{equation}\label{asha89ryhabsjjs}
p(x, y, t)=\frac{2^{1-N}}{N \Gamma \left(\frac{N}{2}\right)  \meas(B_1(x))  t^{\frac{N}{2}}}\exp \left(-\frac{\dist(x, y)^2}{4t}\right),\quad \text{for all $y \in X$ and $t>0$,}
\end{equation}
thus whenever $(X, \dist, \meas)$ is non-parabolic, we have
\begin{equation}
G(x, y)=\frac{1}{N(N-2)\meas (B_1(x))}\dist(x, y)^{2-N}
\end{equation}
namely $\bist_x=c_N \nu_x^{\frac{1}{N-2}}\dist_x$.

\subsection{Proof of Theorem \ref{thm:smooth almost}}
	Under the preparation above, let us prove Theorem \ref{thm:smooth almost} via a contradiction. The proof is based on Cheeger-Colding theory \cite{CheegerColding, CheegerColding1, CheegerColding2, CheegerColding3} (see \cite{D}). 
	Thus assume that the assersion is not satisfied. Then there exist:
\begin{itemize}
\item a sequence of pointed non-parabolic Riemannian manifolds $(M_i^n, \dist^{g_i}, \Vol^{g_i}, x_i)$ of dimension $n$ with non-negative Ricci curvature $\mathrm{Ric}^{g_i}\ge 0$ pmGH converging to a pointed non-parabolic, non-collapsed $\RCD(0, n)$ space $(X, \dist, \haus^n, x)$:
\begin{equation}
(M_i^n, \dist^{g_i}, \mathrm{Vol}^{g_i}, x_i) \stackrel{\mathrm{pmGH}}{\to} (X, \dist, \haus^n, x),
\end{equation}
where $(M_i^n, \dist^{g_i}, \Vol^{g_i}, x_i)$ are away from $(\mathbb{R}^n, \dist_{\mathrm{Euc}}, \mathcal{L}^n, 0_n)$, namely $(X, \dist, \haus^n, x)$ is not isomorphic to $(\mathbb{R}^n, \dist_{\mathrm{Euc}}, \mathcal{L}^n, 0_n)$:
\item a sequence of points $y_i \in M^n_i$ converging to $y \in X$ with $x \neq y$ and $|\nabla \bist_{x_i}^{g_i}|(y_i)\to c_n  \omega_n^{\frac{1}{n-2}}$.
\end{itemize}
Recalling the subharmonicity of $G^{g_i}_{x_i}|\nabla \bist_{x_i}^{g_i}|^2$ as pointed out in subsection \ref{subColding}, the weak Harnack inequality (see for instance \cite{BjornBjorn}) allows us to conclude for a small $r>0$;
\begin{equation}
\int_{B_r(y_i)} \left||\nabla \bist_{x_i}^{g_i}|-c_n \omega_n^{\frac{1}{n-2}}\right|\di \mathrm{Vol}^{g_i} \to 0.
\end{equation}
This implies $|\nabla \bist_x|(z)=c_n \omega_n^{\frac{1}{n-2}}$ with $\nu_z =\omega_n$ for $\haus^n$-a.e. $z \in B_r(y)$ because of the Bishop inequality (\ref{asasj8a8s}). Therefore we can apply the rigidity result in Theorem \ref{rcdsharp} to get that  $(X, \dist, \haus^n, x)$ is isomorphic to $(\mathbb{R}^n, \dist_{\mathrm{Euc}}, \mathcal{L}^n, 0_n)$ because of the rigidity of the Bishop inequality (see \cite[Corollary 1.7]{DePhillippisGigli}). This is a contradiction. Thus we have Theorem \ref{thm:smooth almost}.

In order to justify the arguments above, combining with (\ref{s8syshsbb}), we immediately used the following compactness/continuity results for Green functions which may have independent interests for readers.
	
	\begin{proposition}[Compactness/continuity of Green functions]\label{contigreeeeen}
	Let
	$
	(X_i, \dist_i, \meas_i, x_i)
	$
	be a sequence of pointed non-parabolic $\RCD(0, N)$ spaces for some finite $N >2$. Assume that there exists $\phi \in L^1([1, \infty), \mathcal{L}^1)$ such that  for any $i$ we have 
	\begin{equation}\label{es8snsbsh}
	\frac{r}{\meas_i(B_r(x_i))}\le \phi(r),\quad \text{for $\mathcal{L}^1$-a.e. $r \in [1, \infty)$.}
	\end{equation}
	Then after passing to a subsequence there exists a pointed non-parabolic $\RCD(0,N)$ space $(X, \dist, \meas, x)$ such that $(X_i, \dist_i, \meas_i, x_i)$ pointed measured Gromov-Hausdorff converge to $(X, \dist, \meas, x)$ and that 
	\begin{equation}\label{cont green}
	G^{X_i}(x_i, y_i) \to G^X(x, y),\quad \text{for all $x_i, y_i \in X_i \to x, y \in X$, respectively with $x \neq y$.}
	\end{equation}
	\end{proposition}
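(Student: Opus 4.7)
The plan is to combine pmGH compactness with the heat-kernel representation $G=\int_0^\infty p\,\di t$ and the Gaussian upper bound for the heat kernel of \cite{JiangLiZhang}. First I would extract a pmGH-converging subsequence. By the Bishop-Gromov inequality (\ref{bgbgbg}) the metric spaces $(X_i,\dist_i,x_i)$ are uniformly doubling on bounded sets, hence pGH pre-compact. The lower bound $\meas_i(B_r(x_i))\ge r/\phi(r)$ from (\ref{es8snsbsh}), combined with Bishop-Gromov monotonicity, yields a uniform lower bound on $\meas_i(B_1(x_i))$, while an upper bound follows from Bishop-Gromov and the value of $\phi$ at some fixed $R>1$. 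Standard pmGH compactness of $\RCD(0,N)$ spaces then produces a subsequential limit $(X,\dist,\meas,x)$ which is again an $\RCD(0,N)$ space. Passing (\ref{es8snsbsh}) to the limit along the (a.e.) radii for which $\meas_i(B_r(x_i))\to \meas(B_r(x))$ gives the same bound for the limit, so $(X,\dist,\meas,x)$ is non-parabolic.

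Next I would prove the pointwise convergence (\ref{cont green}). Using the representation $G^{X_i}(x_i,y_i)=\int_0^\infty p^{X_i}(x_i,y_i,t)\,\di t$, the claim reduces to the convergence $p^{X_i}(x_i,y_i,t)\to p^X(x,y,t)$ for each fixed $t>0$ (the locally uniform heat-kernel convergence known for pmGH-converging $\RCD(K,N)$ sequences) together with an $i$-independent domination. For the latter I would use the Gaussian upper bound
\[
p^{X_i}(x_i,y_i,t)\le \frac{C_N}{\meas_i(B_{\sqrt t}(x_i))}\exp\!\left(-\frac{\dist_i(x_i,y_i)^2}{C_N\,t}\right)
\]
and split the time integral at $t=1$. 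On $(0,1]$, Bishop-Gromov gives $\meas_i(B_{\sqrt t}(x_i))\ge t^{N/2}\meas_i(B_1(x_i))$; combined with $\dist_i(x_i,y_i)\ge \dist(x,y)/2>0$ for $i$ large, this yields an $i$-independent envelope of the form $C\,t^{-N/2}\exp(-c/t)$, integrable near $0$. On $[1,\infty)$, dropping the exponential and substituting $s=\sqrt t$ turns the integral into $\int_1^\infty \frac{2s}{\meas_i(B_s(x_i))}\,\di s$, which is dominated by $2C_N\|\phi\|_{L^1}$ thanks to (\ref{es8snsbsh}). Dominated convergence then yields (\ref{cont green}).

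The main obstacle will be arranging the short-time envelope uniformly in $i$: the hypothesis (\ref{es8snsbsh}) is imposed only for $\mathcal{L}^1$-a.e.\ $r\ge 1$, so one must lift it, via Bishop-Gromov monotonicity applied between some essential value $r_0 \in [1, 2]$ of $\phi$ and $r=1$, to a genuine pointwise lower bound $\meas_i(B_1(x_i))\ge c_0>0$ independent of $i$. Once this is in place, the Gaussian bound, the heat-kernel convergence, and the $L^1$ domination via $\phi$ fit together cleanly, and dominated convergence closes the proof.
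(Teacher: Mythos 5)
The paper does not actually prove this proposition here; it is quoted from \cite{HP}, so there is no in-paper argument to compare against, and your attempt can only be judged on its own. Your dominated-convergence strategy --- writing $G=\int_0^\infty p\,\di t$, invoking the heat-kernel continuity along pmGH convergence from \cite{GigliMondinoSavare13,AmbrosioHondaTewodrose,ZhangZhu} for pointwise convergence of the integrands, and using the Gaussian upper bound of \cite{JiangLiZhang} with the time integral split at $t=1$, so that Bishop--Gromov controls the short-time envelope and the $L^1$ bound on $\phi$ controls the tail --- is the natural route and, apart from the point below, is carried out correctly.

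The genuine gap is in the compactness step. You assert that an $i$-uniform upper bound on $\meas_i(B_1(x_i))$ ``follows from Bishop--Gromov and the value of $\phi$ at some fixed $R>1$.'' It does not: hypothesis (\ref{es8snsbsh}) reads $\meas_i(B_r(x_i))\ge r/\phi(r)$, and Bishop--Gromov (\ref{bgbgbg}) gives $\meas_i(B_1(x_i))\ge R^{-N}\meas_i(B_R(x_i))$, so both only produce further \emph{lower} bounds on $\meas_i(B_1(x_i))$, never an upper bound. Indeed no such bound can be extracted from the stated hypotheses alone: the sequence $(\mathbb{R}^N,\dist_{\mathrm{Euc}},i\,\mathcal{L}^N,0)$ satisfies (\ref{es8snsbsh}) with $\phi(r)=\omega_N^{-1}r^{1-N}\in L^1([1,\infty))$ when $N>2$, yet $\meas_i(B_1(x_i))=i\,\omega_N\to\infty$, no pmGH limit exists, and $G^{X_i}=i^{-1}G^{\mathbb{R}^N}\to 0$ does not converge to the Green function of any non-parabolic space. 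So the pmGH-compactness assertion needs an explicit normalization such as $\sup_i\meas_i(B_1(x_i))<\infty$; this is automatic in the non-collapsed case $\meas_i=\haus^n$ via the Bishop inequality (\ref{asasj8a8s}), which is exactly how the proposition is applied in Section~\ref{sec2} and is the relevant setting in \cite{HP}. You should either state this normalization or restrict to non-collapsed spaces. With that added, the remaining steps of your argument --- the uniform lower bound on $\meas_i(B_1(x_i))$ obtained via Bishop--Gromov from an essential $r_0\in[1,2]$ of $\phi$, the integrable envelope $C\,t^{-N/2}\exp(-c/t)$ on $(0,1]$ using $\dist_i(x_i,y_i)\ge\dist(x,y)/2$ for large $i$, the substitution $s=\sqrt t$ bounding the tail by $2C_N\|\phi\|_{L^1}$, the passage of (\ref{es8snsbsh}) to the limit along a.e.\ radii to get non-parabolicity of $(X,\dist,\meas,x)$, and dominated convergence --- all go through.
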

	It should be emphasized that by the proof above, Theorem \ref{thm:smooth almost} is also justified even if we replace the manifold by a non-collapsed $\RCD(0, n)$ space. Compare with the proof of Theorem \ref{almost rigi}. Note that the assumption (\ref{es8snsbsh}) is essential, see \cite{HP} for the details.


\section{Related rigidity and almost rigidity results}\label{sec3}
In this section let us discuss related rigidity and almost rigidity results.
\subsection{Rigidity}\label{new rigid}
Let $(M^n, g)$ be a complete Riemannian manifold of dimension $n$ with non-negative Ricci curvature $\mathrm{Ric}^g\ge 0$. Then the \textit{Cheeger-Yau inequality} established in \cite{CheegerSTYau} implies
\begin{equation}\label{chya}
p(x,y,t)\ge p^{\mathbb{R}^n}(\dist^g(x, y), t)
\end{equation}
for all $x, y \in M^n$ and $t>0$, where $p^{\mathbb{R}^n}$ denotes the heat kernel of $\mathbb{R}^n$, namely
\begin{equation}\label{heateuc}
p^{\mathbb{R}^n}(\bar x, \bar y, t)=p^{\mathbb{R}^n}(|\bar x- \bar y|, t)=(4\pi t)^{-\frac{n}{2}}\exp \left( -\frac{|\bar x - \bar y|^2}{4t}\right).
\end{equation}
Let us introduce another rigidity result about $\bist_x^g$.
\begin{proposition}[Sharp upper bound and rigidity]\label{sharpbest}
Assume that $(M^n, g)$ is non-parabolic with $n \ge 3$ and $\mathrm{Ric}^g \ge 0$. Then for any fixed $x \in M^n$:
\begin{enumerate}
		\item{(Sharp upper bound)} we have 
		\begin{equation}\label{green comp}
G^g(x, y) \ge G^{\mathbb{R}^n}(\dist^g(x, y)).
\end{equation}
In other words,
\begin{equation}\label{bist comp}
\bist_x^g \le c_n \omega_n^{\frac{1}{n-2}}\dist_x^g:
\end{equation}
		\item{(Rigidity)} $(M^n, g)$ is isometric to $(\mathbb{R}^n, \dist_{\mathrm{Euc}})$ with $\bist_x^g=c_n  \omega_n^{\frac{1}{n-2}}\dist_x^g$  if the equality in (\ref{green comp}) (thus equivalently in (\ref{bist comp})) holds for some $y \in M^n\setminus \{x\}$.
	\end{enumerate}
\end{proposition}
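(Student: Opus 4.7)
My plan is to reduce the proposition to Theorem \ref{Colding} via a simple geodesic comparison argument, after first obtaining the upper bound by time-integration of the Cheeger--Yau inequality.

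For part (1), I would integrate (\ref{chya}) in $t$:
\begin{equation*}
G^g(x,y)=\int_0^\infty p^g(x,y,t)\,\di t\;\ge\;\int_0^\infty p^{\mathbb{R}^n}(\dist^g(x,y),t)\,\di t.
\end{equation*}
A routine Gaussian computation based on (\ref{heateuc}) shows that the right-hand side equals $G^{\mathbb{R}^n}(\dist^g(x,y))$ as given by (\ref{eq:01}), proving (\ref{green comp}). Raising both sides to the negative power $\tfrac{1}{2-n}$ reverses the inequality and produces (\ref{bist comp}).

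For part (2), my strategy is to show that the sharp gradient estimate (\ref{sharp constant2}) saturates at the specific point $y$, so that the rigidity conclusion of Theorem \ref{Colding}(2) applies directly. Assume equality in (\ref{green comp}) at some $y\ne x$, i.e.\ $\bist_x^g(y)=c_n\omega_n^{1/(n-2)}\dist^g(x,y)$. Fix a minimizing geodesic $\gamma:[0,t_0]\to M^n$ from $x$ to $y$ with $t_0:=\dist^g(x,y)$, and consider
\begin{equation*}
\phi(t):=c_n\omega_n^{1/(n-2)}\,t-\bist_x^g(\gamma(t)),\qquad t\in(0,t_0].
\end{equation*}
By (\ref{bist comp}) we have $\phi\ge 0$, while $\phi(t_0)=0$ by assumption, hence $\phi'(t_0)\le 0$, which gives
\begin{equation*}
\frac{\di}{\di t}\bist_x^g(\gamma(t))\bigg|_{t=t_0}\;\ge\;c_n\omega_n^{1/(n-2)}.
\end{equation*}
On the other hand, since $\bist_x^g$ is smooth at $y$ and $|\dot\gamma(t_0)|=1$, Cauchy--Schwarz together with the sharp gradient estimate of Theorem \ref{Colding}(1) yields
\begin{equation*}
\frac{\di}{\di t}\bist_x^g(\gamma(t))\bigg|_{t=t_0}=\langle\nabla\bist_x^g(y),\dot\gamma(t_0)\rangle\;\le\;|\nabla\bist_x^g|(y)\;\le\;c_n\omega_n^{1/(n-2)}.
\end{equation*}
Therefore $|\nabla\bist_x^g|(y)=c_n\omega_n^{1/(n-2)}$, and the rigidity part of Theorem \ref{Colding} finishes the argument; the identification $\bist_x^g=c_n\omega_n^{1/(n-2)}\dist_x^g$ on the resulting Euclidean space is then immediate from (\ref{smooth dist def}).

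The Gaussian computation in part (1) is routine. The only delicate point, which I expect to be the main (though mild) obstacle, is the geodesic comparison in part (2): one has to extract a pointwise equality for $|\nabla\bist_x^g|$ at the single point $y$ from the global upper bound (\ref{bist comp}), and the choice of a minimizing geodesic together with the one-sided derivative of $\phi$ at the endpoint $t_0$ is what makes this possible. Once this reduction is achieved, no new analytic work is needed beyond invoking Theorem \ref{Colding}.
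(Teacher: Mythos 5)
Your Part (1) coincides with the paper's first proof (integrating the Cheeger--Yau inequality in $t$; the paper also notes a second proof directly from the sharp gradient estimate). Your Part (2), however, takes a genuinely different route. The paper proceeds analytically: by the Laplacian comparison theorem, $G^{\mathbb{R}^n}(\dist^g(x,\cdot))$ is subharmonic away from $x$, so $G^g(x,\cdot)-G^{\mathbb{R}^n}(\dist^g(x,\cdot))$ is a nonnegative superharmonic function vanishing at $y$; the strong maximum principle forces it to vanish identically, giving $|\nabla\bist_x^g|\equiv c_n\omega_n^{1/(n-2)}$ globally, after which Theorem \ref{Colding}(2) applies. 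You instead pick a minimizing unit-speed geodesic $\gamma$ from $x$ to $y$, observe that $\phi(t):=c_n\omega_n^{1/(n-2)}t-\bist_x^g(\gamma(t))$ is nonnegative on $(0,t_0]$ by (\ref{bist comp}) and vanishes at $t_0$, so its left derivative at $t_0$ is $\le 0$; comparing with the Cauchy--Schwarz bound $\tfrac{\di}{\di t}\bist_x^g(\gamma(t))|_{t_0}\le|\nabla\bist_x^g|(y)$ and the sharp gradient estimate (\ref{sharp constant2}) forces $|\nabla\bist_x^g|(y)=c_n\omega_n^{1/(n-2)}$, and Theorem \ref{Colding}(2) concludes. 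Your argument is more elementary --- it avoids the Laplacian comparison theorem and the strong maximum principle and only uses the upper bound from Part (1) together with Theorem \ref{Colding}(1) --- and it extracts only a pointwise equality at $y$ before handing control to Theorem \ref{Colding}(2), whereas the paper's maximum-principle argument establishes the global identity $G^g(x,\cdot)=G^{\mathbb{R}^n}(\dist^g(x,\cdot))$ en route, which is of independent interest. Both proofs are correct and both ultimately rest on the rigidity part of Theorem \ref{Colding}.
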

\begin{proof}
Firstly let us provide two proofs of (1). 

The first one is simple, namely, integrating with respect to $t$ over $[0, \infty)$ in (\ref{chya}) implies (\ref{green comp}).

The second one is to apply (\ref{sharp constant2}), namely  letting $\bist^g_x(x):=0$, (\ref{sharp constant2}) shows that $\bist^g_x$ is $c_n \omega_n^{\frac{1}{n-2}}$-Lipschitz on $M^n$. Thus
\begin{equation}
\bist^g_x(y)=\bist^g_x(y)-\bist^g_x(x) \le c_n  \omega^{\frac{1}{n-2}}\dist^g(x,y),\quad \text{for any $y \in M^n$}
\end{equation}
which proves (\ref{bist comp}). Thus we have (1).

Next let us provide a proof of (2).

Fix a point $x \in M^n$ and assume that the equality in (\ref{green comp}) is attained at some $y \in M^n \setminus \{x\}$. Since the Laplacian comparison theorem states that $G^{\mathbb{R}^n}(\dist^g(x, \cdot))$ is subharmonic on $M^n \setminus \{x\}$, the function $G^g(x, \cdot)-G^{\mathbb{R}^n}(\dist^g(x, \cdot))$ is superharmonic on $M^n \setminus \{x\}$ because of the harmonicity of $G^g(x, \cdot)$ on $M^n \setminus \{x\}$. Thus applying the strong maximum principle shows $G^g(x, \cdot)=G^{\mathbb{R}^n}(\dist^g(x, \cdot))$ on $M^n \setminus \{x\}$. In particular since $|\nabla \bist_x^g|\equiv c_n \omega^{\frac{1}{n-2}}$ holds on $M^n \setminus \{x\}$, the rigidity result of Theorem \ref{smooth result} yields (2).\footnote{There exists another proof based on (\ref{chya}) as follows. The identity $G^g(x, \cdot)=G^{\mathbb{R}^n}(\dist^g(x, \cdot))$ on $M^n \setminus \{x\}$ with (\ref{chya}) shows that $p(x, y, t)=p^{\mathbb{R}^n}(\dist^g(x,y), t)$ for all $x, y \in X$ and $t>0$. Then we can apply a result in \cite{CT} to conclude.}
\end{proof}
By the proof above, if we wish to discuss a generalization of Proposition \ref{sharpbest} to $\RCD(0,N)$ spaces, it is natural to ask whether a Cheeger-Yau type inequality is valid or not for $\RCD(0,N)$ spaces, though it might be hard to adopt the original proof in \cite{CheegerSTYau} for the purpose. 
However we can apply a Harnack inequality established in \cite{GarofaloMondino, Jiang15} to realize this. Note that (\ref{9999}) below is sharp even for \textit{collapsed} $\RCD(0, N)$ spaces because the equality is attained for the $\RCD(0, N)$ space defined in (\ref{halfline}) with $x=0$ (see also (\ref{asha89ryhabsjjs})). Furthermore notice that roughly speaking (\ref{9999}) means that the heat kernel is bounded below by the one of the tangent cones at $x$ and $y$ because of (\ref{asha89ryhabsjjs}). 
\begin{proposition}[Cheeger-Yau inequality for $\RCD(0,N)$ spaces]\label{CheegerYau}
Let $(X, \dist, \meas)$ be an $\RCD(0, N)$ space for some finite $N \ge 1$. Then for all $x, y \in X$ and $t>0$
\begin{equation}\label{9999}
p(x, y, t) \ge \frac{2^{1-N}}{\min \{\nu_x, \nu_y\}N \Gamma \left( \frac{N}{2}\right) t^{\frac{N}{2}}} \exp \left(-\frac{\dist(x,y)^2}{4t}\right),
\end{equation}
where note that the right-hand-side of (\ref{9999}) coincides with
\begin{equation}
\frac{\omega_N}{\min \{\nu_x, \nu_y\}}p^{\mathbb{R}^N}(\dist(x, y), t)
\end{equation}
if $N$ is an integer because of (\ref{heateuc}).
In particular if $(X, \dist, \meas)$ is non-collapsed, then 
\begin{equation}\label{nsbsux7u}
p(x,y,t)\ge p^{\mathbb{R}^N}\left(\dist(x,y), t\right).
\end{equation}
\end{proposition}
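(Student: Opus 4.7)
The plan is to combine the parabolic Li--Yau-type Harnack inequality for positive solutions of the heat equation on $\RCD(0,N)$ spaces from \cite{GarofaloMondino, Jiang15} with the sharp short-time on-diagonal asymptotic of the heat kernel at a point of finite volume density. I first dispose of the trivial case $\min\{\nu_x, \nu_y\} = \infty$, where the right-hand side of (\ref{9999}) vanishes. By the symmetry $p(x, y, t) = p(y, x, t)$ it then suffices to prove the bound with $\nu_x$ in place of $\min\{\nu_x, \nu_y\}$, under the assumption $\nu_x < \infty$; the analogous bound with $\nu_y$ will follow by the same argument with the roles swapped, and taking the sharper of the two yields the $\min$.

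The main step is to apply the Harnack inequality to the positive solution $u(z, \tau) := p(x, z, \tau)$ of the heat equation. For any $0 < s < t$ it reads
$$p(x, x, s) \le p(x, y, t) \left(\frac{t}{s}\right)^{N/2} \exp\!\left(\frac{\dist(x,y)^2}{4(t-s)}\right),$$
or equivalently
$$p(x, y, t) \ge \bigl(s^{N/2} p(x, x, s)\bigr)\, t^{-N/2} \exp\!\left(-\frac{\dist(x,y)^2}{4(t-s)}\right).$$
I would then send $s \to 0^+$ and use the sharp on-diagonal asymptotic
$$\lim_{s \to 0^+} s^{N/2}\, p(x, x, s) = \frac{2^{1-N}}{N\, \Gamma(N/2)\, \nu_x}.$$

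To justify this asymptotic I would argue by blow-up: since $\nu_x < \infty$, every tangent cone at $x$ is, by \cite{DG}, an $N$-metric measure cone $(C_x, \dist_{C_x}, \meas_{C_x}, o_x)$ over an $\RCD(N-2, N-1)$ space with $\nu_{o_x} = \nu_x$, whose heat kernel at the vertex is given explicitly by (\ref{asha89ryhabsjjs}), yielding $p^{C_x}(o_x, o_x, 1) = \frac{2^{1-N}}{N\, \Gamma(N/2)\, \nu_x}$. Setting $r = \sqrt{s}$, the rescaled space $(X, r^{-1}\dist, r^{-N}\meas, x)$ pmGH-converges to $(C_x, \dist_{C_x}, \meas_{C_x}, o_x)$, and its heat kernel at $(x, x, 1)$ equals $s^{N/2}\, p^X(x, x, s)$; the stability of heat kernels under pmGH convergence of $\RCD$ spaces, together with the uniform Gaussian bounds of \cite{JiangLiZhang}, delivers the limit. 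Inserting this into the Harnack bound and symmetrizing in $x, y$ yields (\ref{9999}); the non-collapsed corollary (\ref{nsbsux7u}) is then immediate from the Bishop inequality $\nu_x \le \omega_N$.

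The main obstacle is the sharp asymptotic for $s^{N/2} p(x, x, s)$: the Gaussian two-sided estimates of \cite{JiangLiZhang} control its size up to dimensional constants but do not yield the sharp coefficient $\frac{2^{1-N}}{N\, \Gamma(N/2)\, \nu_x}$. Extracting this constant requires both the identification of every tangent cone at $x$ as a metric measure cone over an $\RCD(N-2, N-1)$ space of matching density and the stability of heat kernels under pmGH convergence of the rescaled $\RCD$ spaces; once these two ingredients are in place, the Harnack manipulation and the symmetrization in $x, y$ are purely mechanical.
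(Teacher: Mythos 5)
Your proof follows essentially the same route as the paper's: apply the Li--Yau/Harnack inequality from \cite{GarofaloMondino, Jiang15} with $z=x$ and $0<s<t$, rearrange, and send $s\to 0^+$ using the sharp on-diagonal asymptotic $\lim_{s\to 0^+} s^{N/2} p(x,x,s) = \tfrac{2^{1-N}}{N\Gamma(N/2)\nu_x}$, itself obtained by blow-up to the $N$-metric measure cone tangent at $x$ (via \cite{DG}), the explicit vertex heat-kernel formula (\ref{asha89ryhabsjjs}), and the pmGH stability of heat kernels; symmetry of $p$ and the Bishop inequality finish the argument. The only cosmetic difference is your choice of normalization $r^{-N}\meas$ versus the paper's $\meas(B_{\sqrt{s}}(x))^{-1}\meas$ in the rescaled spaces, which leads to the same limit.
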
 
\begin{proof}
Note that with no loss of generality we can assume $\nu_x<\infty$.
Recalling \cite[Corollary 6.1]{Jiang15}, we know for all $x, y, z \in X$ and $0<s<t$,
\begin{equation}
p(x, z, s) \le p(y, z, t) \exp \left(\frac{\dist(x, y)^2}{4(t-s)}\right)  \left(\frac{t}{s}\right)^{\frac{N}{2}}.
\end{equation}
Thus letting $x=z$, we have
\begin{equation}\label{s6sgg}
s^{\frac{N}{2}}p(x, x, s)  \exp \left(-\frac{\dist(x, y)^2}{4(t-s)}\right) t^{-\frac{N}{2}} \le p(y, x, t).
\end{equation}
Recalling that any tangent cone at $x$ is the $N$-metric measure cone over an $\RCD(N-2, N-1)$ space,
thanks to the continuity of the heat kernels with respect to the pmGH convergence established in \cite{AmbrosioHondaTewodrose, ZhangZhu} after \cite{GigliMondinoSavare13} with (\ref{asha89ryhabsjjs}), we know 
\begin{equation}\label{s6sgg2}
s^{\frac{N}{2}}p(x, x, s) =\frac{s^{\frac{N}{2}}}{\meas (B_{\sqrt{s}}(x))} \cdot \meas (B_{\sqrt{s}}(x)) p(x,x,s) \to \frac{1}{\nu_x} \cdot \frac{2^{1-N}}{N \Gamma \left( \frac{N}{2}\right)},\quad \text{as $s \to 0^+$,}
\end{equation}
where we used a fact that considering a rescaled pointed $\RCD(0, N)$ space:
\begin{equation}
\left(X, \frac{1}{\sqrt{s}}\dist, \frac{1}{\meas (B_{\sqrt{s}}(x))}\meas, x \right),
\end{equation}
after passing to a subsequence as $s \to 0^+$, $\meas (B_{\sqrt{s}}(x)) p(x,x,s)$ converges to the one of the tangent cone, thus the limit is equal to $\frac{2^{1-N}}{N \Gamma \left( \frac{N}{2}\right)}$ because of (\ref{asha89ryhabsjjs}).
Thus letting $s \to 0^+$ in (\ref{s6sgg}) with (\ref{s6sgg2}) completes the proof of (\ref{9999}) because of the symmetry of the heat kernel, $p(x, y, t)=p(y,x, t)$.

The remaining statement, (\ref{nsbsux7u}), is a direct consequence of (\ref{9999}) with the Bishop inequality (\ref{asasj8a8s}).
\end{proof}

Finally as a corollary of Proposition \ref{CheegerYau}, we obtain a non-smooth analogue of Proposition \ref{sharpbest}. Since the proof is similar to that of Proposition \ref{sharpbest} with the Laplacian comparison theorem established in \cite{Gigli1}, we omit it.
\begin{corollary}[Sharp upper bound and rigidity]\label{sharpbest2}
Let $(X, \dist, \meas)$ be a non-parabolic $\RCD(0, N)$ space for some finite $N >2$. Then for any $x \in X$ with $\nu_x<\infty$:
\begin{enumerate}
\item{(Sharp upper bound)} we have 
\begin{equation}\label{green comp rcd}
G(x,y) \ge \frac{1}{N(N-2)\nu_x} \dist(x, y)^{2-N},\quad \text{for any $y \in X$.}
\end{equation}
In other words,
\begin{equation}\label{bist comp44}
\bist_x \le c_N\nu_x^{\frac{1}{N-2}}\dist_x:
\end{equation}
		\item{(Rigidity)} $(X, \dist, \meas, x)$ is isomorphic to the $N$-metric measure cone with the pole over an $\RCD(N-2, N-1)$ space with $\bist_x=c_N  \nu_x^\frac{1}{N-2}\dist_x$  if  the equality in (\ref{green comp rcd}) holds for some $y \in X \setminus \{x\}$.
	
\end{enumerate}
\end{corollary}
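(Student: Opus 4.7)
The plan is to mirror the two-step strategy of Proposition \ref{sharpbest}, replacing the smooth Laplacian comparison and strong maximum principle with their $\RCD$ counterparts from \cite{Gigli1} and the nonlinear potential theory of \cite{BjornBjorn}.

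For part (1), I see two natural routes, exactly parallel to the two proofs of Proposition \ref{sharpbest}(1). The fastest is to integrate the Cheeger--Yau inequality (\ref{9999}) with respect to $t \in (0, \infty)$: bounding $\min\{\nu_x, \nu_y\} \le \nu_x$ in (\ref{9999}) and integrating gives
\begin{equation*}
G(x,y) \ge \frac{2^{1-N}}{\nu_x N \Gamma(N/2)} \int_0^\infty t^{-N/2} \exp\!\left(-\frac{\dist(x,y)^2}{4t}\right) \di t.
\end{equation*}
The substitution $u = \dist(x,y)^2/(4t)$ turns the $t$-integral into $4^{N/2-1}\, \dist(x,y)^{2-N}\, \Gamma(N/2 - 1)$, and after collecting constants with $\Gamma(N/2-1)/\Gamma(N/2) = 2/(N-2)$ one obtains precisely (\ref{green comp rcd}). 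Alternatively, one may invoke Theorem \ref{rcdsharp}(1) directly: the sharp gradient bound $|\nabla \bist_x| \le c_N \nu_x^{1/(N-2)}$ on $X \setminus \{x\}$, together with the continuous extension $\bist_x(x) := 0$ that follows from the Gaussian estimate for the heat kernel (giving $G(x,\cdot) \to \infty$ at the pole), upgrades to a global Lipschitz bound, whence $\bist_x(y) = \bist_x(y) - \bist_x(x) \le c_N \nu_x^{1/(N-2)}\dist(x,y)$, which is (\ref{bist comp44}).

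For part (2), the key ingredient is the Laplacian comparison theorem on $\RCD(0,N)$ spaces established in \cite{Gigli1}, which in particular yields that $\dist(x,\cdot)^{2-N}$ is subharmonic on $X \setminus \{x\}$ in a suitable weak sense. Since $G(x,\cdot)$ is harmonic on $X \setminus \{x\}$, the difference
\begin{equation*}
F := G(x,\cdot) - \frac{1}{N(N-2)\nu_x}\dist(x,\cdot)^{2-N}
\end{equation*}
is superharmonic there, nonnegative by part (1), and vanishes at the hypothesised point $y \neq x$. Applying the strong maximum principle for $\RCD$ spaces (via \cite{BjornBjorn}) forces $F \equiv 0$, i.e.\ $\bist_x = c_N \nu_x^{1/(N-2)} \dist_x$ on $X$. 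In particular $|\nabla \bist_x| = c_N \nu_x^{1/(N-2)}|\nabla \dist_x|$, and since $|\nabla \dist_x| = 1$ $\meas$-a.e. (in fact on a dense set of regular points), the equality in (\ref{sharp constant}) of Theorem \ref{rcdsharp} is attained; the rigidity clause of that theorem then identifies $(X, \dist, \meas, x)$ with the required $N$-metric measure cone over an $\RCD(N-2, N-1)$ space.

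The main obstacle I anticipate is purely technical: carefully formulating the subharmonicity of $\dist(x,\cdot)^{2-N}$ across its pole at $x$ and running the $\RCD$ strong maximum principle for $F$ on the punctured space $X \setminus \{x\}$ while controlling the behaviour at the pole. Both ingredients are available in the references already cited in the excerpt, so once they are invoked the argument reduces to the smooth template of Proposition \ref{sharpbest}, and indeed the author explicitly omits the proof on these grounds.
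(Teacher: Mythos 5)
Your proposal reproduces exactly the strategy the paper intends: for part (1) either integrate the Cheeger--Yau inequality (\ref{9999}) in $t$ (the $\Gamma$-function bookkeeping you carry out is correct) or invoke the sharp Lipschitz bound from Theorem \ref{rcdsharp}(1), and for part (2) use Gigli's Laplacian comparison to make $\dist(x,\cdot)^{2-N}$ subharmonic, apply the strong maximum principle to the superharmonic difference, and feed the resulting identity back into the rigidity clause of Theorem \ref{rcdsharp}. This is precisely what the author means by ``similar to that of Proposition \ref{sharpbest} with the Laplacian comparison theorem established in \cite{Gigli1}'', so the proof is correct and takes the same approach as the paper.
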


\subsection{Almost rigidity for $\RCD(0, N)$ spaces}\label{subsec2}

We can prove an analogous almost rigidity result of Theorem \ref{thm:smooth almost}, for $\bist_x$ instead of $|\nabla \bist_x|$. Since this can be stated for $\RCD(0, N)$ spaces (recall that Theorem \ref{thm:smooth almost} can be also stated for $\RCD(0, N)$ spaces, see \cite{HP}), let us introduce it as follows.
\begin{theorem}[Almost rigidity, Part II]\label{almost rigi}
For any integer $n \ge 3$, all $0<\eps<1, 0<r<R, 1 \le p <\infty$ and $\phi \in L^1([1, \infty), \mathcal{L}^1)$ there exists $\delta:=\delta(n, \eps, r, R, p, \phi)>0$ such that 
if 
a non-collapsed $\RCD(0, n)$ space $(X, \dist, \haus^n)$ satisfies
\begin{equation}
\frac{s}{\haus^n (B_s(x))} \le \phi(s),\quad \text{for $\mathcal{L}^1$-a.e. $s \in [1, \infty)$}
\end{equation}
for some $x \in X$ and that
\begin{equation}\label{almost max point2}
c_n\omega_n^{\frac{1}{n-2}}\dist_x(y)-\bist_x(y)\le \delta
\end{equation}
holds for some $y \in B_R(x) \setminus B_r(x)$, then we have 
\begin{align}\label{r lower2}
\dist_{\mathrm{pmGH}}\left( (X, \dist, \haus^n, x), (\mathbb{R}^n, \dist_{\mathbb{R}^n}, \mathcal{L}^n, 0_n) \right)\le \eps
\end{align}
and 
\begin{equation}\label{impsosirbb2}
\left\| \bist_x-c_n\omega_n^{\frac{1}{n-2}}\dist_x\right\|_{L^{\infty}(B_R(x))}+ \left\| \bist_x-c_n\omega_n^{\frac{1}{n-2}}\dist_x\right\|_{H^{1, p}(B_R(x))}\le \eps.
\end{equation}
\end{theorem}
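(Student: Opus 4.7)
The plan is to mimic the contradiction argument used for Theorem \ref{thm:smooth almost}, replacing the role of the sharp gradient estimate by the sharp upper bound from Corollary \ref{sharpbest2}. The advantage over Theorem \ref{thm:smooth almost} is that $\bist_x$ is itself continuous, so the weak Harnack step used there is no longer needed; the pointwise information passes directly to the limit via continuity of Green functions.

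Suppose, for contradiction, that the statement fails. Then one obtains a sequence of non-collapsed $\RCD(0,n)$ spaces $(X_i,\dist_i,\haus^n,x_i)$ with $s/\haus^n(B_s(x_i))\le \phi(s)$ for $\mathcal{L}^1$-a.e.\ $s\in[1,\infty)$, and points $y_i\in B_R(x_i)\setminus B_r(x_i)$ with
\[
c_n\omega_n^{\frac{1}{n-2}}\dist_{x_i}(y_i)-\bist_{x_i}(y_i)\to 0,
\]
along which either (\ref{r lower2}) or (\ref{impsosirbb2}) fails. Combining Proposition \ref{contigreeeeen} with the pmGH stability of non-collapsed $\RCD(0,n)$ spaces, after passing to a subsequence we obtain pmGH convergence to a non-parabolic, non-collapsed $\RCD(0,n)$ space $(X,\dist,\haus^n,x)$, with $y_i\to y\in\overline{B_R(x)}\setminus B_r(x)$ and $G^{X_i}(x_i,y_i)\to G^X(x,y)$, hence $\bist_x(y)=c_n\omega_n^{\frac{1}{n-2}}\dist_x(y)$.

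Next I would combine Corollary \ref{sharpbest2} with the Bishop inequality (\ref{asasj8a8s}) to obtain the chain
\[
\bist_x(y)\le c_n\nu_x^{\frac{1}{n-2}}\dist_x(y)\le c_n\omega_n^{\frac{1}{n-2}}\dist_x(y),
\]
which together with the limiting identity above forces equalities throughout. The equality case of Bishop gives $\nu_x=\omega_n$, so $x$ is $n$-regular by \cite[Corollary 1.7]{DePhillippisGigli}, while the rigidity part of Corollary \ref{sharpbest2} identifies $(X,\dist,\haus^n,x)$ with an $n$-metric measure cone with vertex $x$. Since the tangent cone at the vertex of a metric cone coincides with the cone itself and must be Euclidean, we conclude $(X,\dist,\haus^n,x)\cong(\mathbb{R}^n,\dist_{\mathrm{Euc}},\mathcal{L}^n,0_n)$, which contradicts the failure of (\ref{r lower2}).

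For the remaining estimates (\ref{impsosirbb2}), the uniform $c_n\omega_n^{\frac{1}{n-2}}$-Lipschitz bound on $\bist_{x_i}$ granted by Theorem \ref{rcdsharp}, together with the vanishing of the limit function $\bist_x-c_n\omega_n^{\frac{1}{n-2}}\dist_x$ on $\mathbb{R}^n$, upgrades by an equicontinuity argument in the pmGH category to uniform convergence on bounded sets, giving the $L^\infty$ bound. The $H^{1,p}$ bound then follows from the Mosco/$\Gamma$-stability of the Cheeger energies along pmGH convergence (as in \cite{GigliMondinoSavare13, AmbrosioHondaTewodrose}) combined with the uniform $L^\infty$-bound on $|\nabla \bist_{x_i}|$, forcing $L^p$-convergence of $|\nabla \bist_{x_i}|$ to the Euclidean value $c_n\omega_n^{\frac{1}{n-2}}$ on bounded regions. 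I expect this last upgrade, from pmGH and $L^\infty$ information to the actual $H^{1,p}$ estimate, to be the main technical obstacle, since all the other ingredients — compactness/continuity of Green functions, Corollary \ref{sharpbest2} and the rigidity of Bishop — are essentially quotations of results already recorded in the excerpt.
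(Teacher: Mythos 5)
Your proposal is correct and follows essentially the same contradiction argument as the paper: pass to a pmGH limit using Proposition~\ref{contigreeeeen}, conclude from the sharp upper bound chain of Corollary~\ref{sharpbest2} combined with the Bishop inequality (\ref{asasj8a8s}) that $\nu_x=\omega_n$ and the equality case holds, and then use the rigidity of Bishop together with the cone rigidity of Corollary~\ref{sharpbest2} to identify the limit with $\mathbb{R}^n$. Your closing discussion of the $L^\infty$/$H^{1,p}$ upgrade from the uniform Lipschitz bound and $H^{1,2}_{\mathrm{loc}}$-strong convergence fills in a step the paper leaves implicit, but it is the same mechanism the paper relies on.
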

\begin{proof}
The proof is very similar to that of Theorem \ref{thm:smooth almost}.
Namely assume that the assertion is not satisfied. Then there exist:
\begin{itemize}
\item a sequence of pointed non-parabolic, non-collapsed $\RCD(0,  n)$ spaces $(X_i, \dist_i, \haus^n, x_i)$ pmGH converging to a pointed non-parabolic, non-collapsed $\RCD(0, n)$ space $(X, \dist, \haus^n, x)$:
\begin{equation}
(X_i, \dist_i, \haus^n, x_i) \stackrel{\mathrm{pmGH}}{\to} (X, \dist, \haus^n, x)
\end{equation}
with the locally uniform and $H^{1,2}_{\mathrm{loc}}$-strong convergence of $\bist_{x_i}$ to $\bist_x$, 
where $(X_i, \dist_i, \haus^n_i, x_i)$ are away from $(\mathbb{R}^n, \dist_{\mathrm{Euc}}, \mathcal{L}^n, 0_n)$, namely $(X, \dist, \haus^n, x)$ is not isometric to $(\mathbb{R}^n, \dist_{\mathrm{Euc}}, \mathcal{L}^n, 0_n)$:
\item a sequence of points $y_i \in X_i$ converging to $y \in X$ with $x \neq y$ and $c_n\omega_n^{\frac{1}{n-2}}\dist_{x_i}(y_i) - \bist_{x_i}(y_i) \le \delta_i$ for some $\delta_i \to 0^+$.
\end{itemize}
It follows from (\ref{asasj8a8s}) and (\ref{bist comp44}) that
\begin{equation}\label{ee88jj}
0\le  c_n \nu_{x_i}^{\frac{1}{n-2}} \dist_{x_i}(y_i)-\bist_{x_i}(y_i) \le c_n \omega_n^{\frac{1}{n-2}} \dist_{x_i}(y_i)-\bist_{x_i}(y_i) \le \delta_i \to 0,
\end{equation}
thus Proposition \ref{contigreeeeen} yields $\bist_x(y)=c_n\nu_x^{\frac{1}{n-2}}\dist_x(y)$ and $\nu_x=\omega_n$. 
In particular $x$ is an $n$-regular point because of the rigidity of the Bishop inequality (\ref{asasj8a8s}). Thus applying the rigidity result of Corollary \ref{sharpbest2} proves that  $(X, \dist, \haus^n, x)$ is isometric to $(\mathbb{R}^n, \dist_{\mathrm{Euc}}, \mathcal{L}^n, 0_n)$. This is a contradiction. Thus we have Theorem \ref{almost rigi}.
\end{proof}



\begin{acknowledgement}
The author would like to thank the referee for his/her careful reading with valuable suggestions for a revision.
He acknowledges supports of the Grant-in-Aid for Scientific Research (B) of 20H01799, the
Grant-in-Aid for Scientific Research (B) of 21H00977 and Grant-in-Aid for Transformative
Research Areas (A) of 22H05105.
\end{acknowledgement}

\end{document}